\newtheorem{theor}{Theorem}[section]
\newtheorem{cor}[theor]{Corollary}
\newtheorem{defi}[theor]{Definition}
\newtheorem{prop}[theor]{Lemma}
\newcommand{\Irr}{\mathrm{Irr}}
\begin{document}

\begin{center}
{\Large 
Vanishing classes for $p$-singular characters of symmetric groups}

Lucia Morotti
\end{center}

\begin{abstract}
We call an irreducible character $p$-singular if $p$ divides its degree. We prove a number of equivalent conditions for a character of the symmetric group $S_n$ to be $p$-singular, involving a certain family of conjugacy classes. This answers in part a question by Navarro.
\end{abstract}

\section{Introduction}

The work presented here started from the following question of Navarro to Olsson (December 2010):

\noindent ``If $p$ is a prime, what are the elements $x$ of the symmetric group $S_n$ such that $\chi(x)=0$ for all $\chi\in\Irr(S_n)$ of degree divisible by $p$?''

We will now give two definitions which will allow us to reformulate the above question.

\begin{defi}[$p$-singular character]
Let $\chi$ be an irreducible character of a finite group and let $p$ be prime. We say that $\chi$ is \emph{$p$-singular} if $p$ divides its degree.
\end{defi}

\begin{defi}[$p$-vanishing class]
A conjugacy class of a finite group $G$ is called \emph{$p$-vanishing} if all $p$-singular irreducible characters of $G$ take value 0 on that conjugacy class.
\end{defi}

Let $p$ be a prime and $n$ a non-negative integer. The above question can then be reformulated as: what are the $p$-vanishing conjugacy classes of $S_n$?

In this paper we will give a partial answer to this question, showing that certain conjugacy classes of $S_n$ are $p$-vanishing (see Corollary \ref{c1}). For $p\geq 5$ we conjecture that this answers the question completely.

Through all of the paper let $n=a_0+a_1p+\ldots+a_kp^k$ be the $p$-adic decomposition of $n$. In order to simplify notations in the following we define $a_i:=0$ for $i>k$. We will start by giving a name to a family of partitions which will play an important role in the paper. In the following, for $m\geq 0$, we will write $\beta\vdash m$ to indicate that $\beta$ is a partition of $m$.

\begin{defi}[Partition of $p$-adic type]
A partition of $n$ is of \emph{$p$-adic type} if it is of the form
\[\left(d_{k,1}p^k,\ldots,d_{k,{h_k}}p^k,\ldots,d_{0,1},\ldots,d_{0,{h_0}}\right)\]
with $(d_{i,1},\ldots,d_{i,{h_i}})\vdash a_i$ for $0\leq i\leq k$.
\end{defi}

As $0\leq a_i<p$ for $0\leq i\leq k$ we have that a partition $\alpha=(\alpha_j)_{j\geq 0}$ is of $p$-adic type if and only if
\[\sum_{{j:p^i\mid\alpha_j,}\atop{p^{i+1}\!\not\,|\,\alpha_j}}\alpha_j=a_ip^i\]
for $0\leq i\leq k$. For example the partition $\lambda_n:=((p^k)^{a_k},\ldots,p^{a_1},1^{a_0})$ (written in exponential notation) is of $p$-adic type. This partition will have a special role in the following.

For $\alpha$ and $\beta$ partitions of $n$, let $\chi^\alpha$ be the irreducible character of $S_n$ labeled by $\alpha$ and $\chi^\alpha_\beta$ be the value that $\chi^\alpha$ takes on the conjugacy class of $S_n$ labeled by $\beta$. We are now ready to state our main result, which gives a number of equivalent conditions for an irreducible character of $S_n$ to be $p$-singular.

\begin{theor}\label{t2}
Let $\alpha\vdash n$. The following are equivalent.
\begin{enumerate}
\def\theenumi{(\roman{enumi})}
\renewcommand{\labelenumi}{(\roman{enumi})}
\item\label{i1}
$\chi^\alpha$ is $p$-singular.

\item\label{i3}
$\chi^\alpha_\beta=0$ for every $\beta\vdash n$ of $p$-adic type.

\item\label{i2}
$\chi^\alpha_{\lambda_n}=0$.

\item\label{i4}
We cannot remove from $\alpha$ a sequence of hooks of lengths given, in order, by the parts of $\lambda_n$.
\end{enumerate}
\end{theor}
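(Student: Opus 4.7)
The plan is to close the cycle
\[\text{(i)} \Rightarrow \text{(ii)} \Rightarrow \text{(iii)} \Rightarrow \text{(i)},\]
using the equivalence $\text{(i)}\Leftrightarrow\text{(iv)}$ as a combinatorial bridge.

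The implication (ii) $\Rightarrow$ (iii) is immediate, since $\lambda_n$ is itself of $p$-adic type (choose all $d_{i,j}=1$). For (iii) $\Rightarrow$ (i), any $g\in S_n$ of cycle type $\lambda_n$ has order $p^k$ and is therefore a $p$-element. Writing $\rho$ for the representation affording $\chi^\alpha$, the eigenvalues of $\rho(g)$ are $p$-power roots of unity, each congruent to $1$ modulo any prime of $\mathbb{Z}[\zeta_{p^k}]$ above $p$; summing and using that the characters of $S_n$ are integer-valued gives $\chi^\alpha_{\lambda_n}\equiv\chi^\alpha(1)\pmod p$. Hence $\chi^\alpha_{\lambda_n}=0$ forces $p\mid\chi^\alpha(1)$.

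For (i) $\Leftrightarrow$ (iv), the starting point is the hook length formula, which gives
\[v_p(\chi^\alpha(1))=\sum_{i\ge 1}\bigl(\lfloor n/p^i\rfloor - N_{p^i}(\alpha)\bigr),\]
where $N_{p^i}(\alpha)$ is the number of cells of $\alpha$ with hook length divisible by $p^i$. The classical identity $N_{p^i}(\alpha)=w_{p^i}(\alpha)$ (the $p^i$-weight) together with the bound $w_{p^i}(\alpha)\le\lfloor n/p^i\rfloor$ reduces (i) to the statement that $w_{p^i}(\alpha)=\lfloor n/p^i\rfloor$ for all $i\ge 1$. Induction on $k$ then converts this into (iv): $w_{p^k}(\alpha)=a_k$ is equivalent to $\alpha$ admitting $a_k$ successive removals of $p^k$-hooks to a $p^k$-core $\alpha'$ of size $n-a_kp^k$, which has $p$-adic digits $(a_0,\ldots,a_{k-1})$, and a short calculation (using that removing a $p^k$-hook decreases $N_{p^i}$ by exactly $p^{k-i}$ for $i\le k$) shows that the maximality conditions at levels $i<k$ translate cleanly between $\alpha$ and $\alpha'$.

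The most substantive step is (i) $\Rightarrow$ (ii). Suppose $\chi^\alpha_\beta\ne 0$ for some $\beta$ of $p$-adic type; by Murnaghan--Nakayama there is a sequence of hook removals from $\alpha$ whose successive lengths are the parts of $\beta$. Each such part has the form $dp^i$ with $1\le d<p$; on the $p^i$-abacus of the current intermediate partition, a rim hook of length $dp^i$ encodes a $d$-hook in the partition sitting on a single runner, forcing that runner partition to have size $\ge d$ and hence permitting $d$ successive removals of $p^i$-hooks reaching the same partition. Refining every part of $\beta$ this way produces a full $\lambda_n$-sequence of hook removals from $\alpha$, contradicting (i) via (i) $\Leftrightarrow$ (iv). The main obstacle will be the abacus bookkeeping in (i) $\Leftrightarrow$ (iv): verifying that simultaneous maximality of all the $p^i$-weights is equivalent to peeling $\lambda_n$ off in the prescribed order.
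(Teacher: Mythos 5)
Your proposal is correct, but it closes the equivalence along a genuinely different path from the paper, which proves the single cycle (i)$\Rightarrow$(ii)$\Rightarrow$(iii)$\Rightarrow$(iv)$\Rightarrow$(i). The sharpest divergence is your (iii)$\Rightarrow$(i): the paper instead proves (iii)$\Rightarrow$(iv) by iterating the James--Kerber factorization $\chi^\alpha_\beta=c^\alpha_\beta\chi^{\alpha_{(h)}}_\gamma$ over the successive cores $\alpha_{(p^{k+1})},\ldots,\alpha_{(1)}$, and only then gets back to (i); your congruence $\chi^\alpha(g)\equiv\chi^\alpha(1)\pmod p$ for the $p$-element $g$ of cycle type $\lambda_n$ is a valid character-theoretic shortcut (the eigenvalue congruence holds modulo a prime above $p$ and descends to $\mathbb{Z}$ because the values are rational integers) that bypasses that lemma entirely. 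For (i)$\Leftrightarrow$(iv) you essentially reprove from the hook length formula the degree criterion the paper outsources to Macdonald, phrased as $w_{p^i}(\alpha)=\lfloor n/p^i\rfloor$ for all $i$ rather than $b_i(\alpha)=a_i$ for all $i$; these are equivalent, and your peeling induction is the paper's (iv)$\Rightarrow$(i) argument in different clothing. For (i)$\Rightarrow$(ii) the paper argues by pure weight counting: taking $j$ maximal with $b_j(\alpha)<a_j$, one gets $w_{p^j}(\alpha)<\sum_{i\geq j}p^{i-j}a_i$, so the initial segment of $\beta$ whose parts are divisible by $p^j$ cannot even be removed and Murnaghan--Nakayama gives $0$; you instead refine a hypothetical $\beta$-removal into a $\lambda_n$-removal via the $p^i$-quotient and contradict (iv). Your refinement is sound, but the phrase ``size $\geq d$ \ldots\ reaching the same partition'' elides the one point that needs saying: the $dp^i$-hook removal is a $d$-hook removal in a single quotient component, and you must interpolate it by a saturated chain of $d$ single-box removals in that component, each corresponding to a $p^i$-hook of the ambient partition, to land on the same intermediate partition. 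Once that is spelled out your argument is complete; the paper's counting version is somewhat more elementary here, while your (iii)$\Rightarrow$(i) is shorter and more conceptual than the paper's route through (iv).
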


The following is an easy corollary to the above theorem and generalises Theorem 4.1 of \cite{b5}.

\begin{cor}\label{c1}
Conjugacy classes of $S_n$ labeled by partitions of $p$-adic type are $p$-vanishing.
\end{cor}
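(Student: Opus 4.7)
My plan is to derive Corollary \ref{c1} as an immediate consequence of the implication (i)$\,\Rightarrow\,$(ii) in Theorem \ref{t2}. Unwinding the definition of a $p$-vanishing conjugacy class, I must show that for every $\beta\vdash n$ of $p$-adic type and every $p$-singular $\chi^\alpha\in\Irr(S_n)$, the value $\chi^\alpha_\beta$ equals $0$. Fix such an $\alpha$: the hypothesis that $\chi^\alpha$ is $p$-singular is precisely condition (i) of Theorem \ref{t2} applied to $\alpha$, while condition (ii) is exactly the conjunction of the equalities $\chi^\alpha_\beta=0$ as $\beta$ runs over partitions of $p$-adic type. Applying (i)$\,\Rightarrow\,$(ii) to each $p$-singular $\alpha\vdash n$ yields the corollary.

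Consequently, once Theorem \ref{t2} is in hand there is essentially nothing left to do: of the four equivalent conditions only one is invoked, and only in one direction. I do not anticipate any technical obstacle, since all the substance has been packaged into Theorem \ref{t2} itself. The comparison with Theorem 4.1 of \cite{b5} is natural in this light: the partition $\lambda_n$ is one specific partition of $p$-adic type, so that earlier result amounts to the implication (i)$\,\Rightarrow\,$(iii), and the present corollary strengthens it by allowing $\beta$ to be an arbitrary partition of $p$-adic type rather than only $\lambda_n$.
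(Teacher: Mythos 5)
Your proposal is correct and matches the paper's (implicit) argument exactly: the corollary is stated as an immediate consequence of Theorem \ref{t2}, obtained by applying the implication \ref{i1}$\Rightarrow$\ref{i3} to each $p$-singular $\chi^\alpha$ and unwinding the definition of a $p$-vanishing class. No further work is needed.
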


In Section \ref{s1} we will prove Theorem \ref{t2} by proving the following chain of implications:
\[\ref{i1}\Rightarrow\ref{i3}\Rightarrow\ref{i2}\Rightarrow\ref{i4}\Rightarrow\ref{i1}.\]
In \cite{b5}, Malle, Navarro and Olsson already proved some implications between the conditions in Theorem \ref{t2}. They proved that $\ref{i1}\Rightarrow\ref{i4}\Rightarrow\ref{i2}$, where $\ref{i4}\Rightarrow\ref{i2}$ is a consequence of the Murnaghan-Nakayama formula.

\section{Preliminary Lemmas}

In this section we will state and give references for or prove all the results needed to prove Theorem \ref{t2}. In the following, for $h\geq 1$, let $w_h(\alpha)$ be the $h$-weight of $\alpha$, that is the maximum number of $h$-hooks which can be recursively removed from $\alpha$. The $h$-weight of a partition is also equal to the number of its hooks of length divisible by $h$ (see Proposition 3.6 of \cite{b4}). For more reference about hooks, hook lengths, hook removal and $h$-weight, as well as about $h$-cores, see Sections I.1 and I.3 of \cite{b4}.

\begin{defi}\label{d1}
Let $\alpha$ be a partition of $n$. For $i\geq 0$ define
\[b_i(\alpha):=w_{p^i}(\alpha)-pw_{p^{i+1}}(\alpha).\]
\end{defi}

Lemma 4.4 of \cite{b5} says that if $\beta$ and $\gamma$ are partitions such that $\gamma$ is obtained from $\beta$ by removing a hook of length $hl$, then $w_h(\gamma)=w_h(\beta)-l$. Repeatedly applying it we easily have the following.

\begin{prop}\label{p1}
Let $\alpha$ and $\beta$ be partitions such that $\beta$ is obtained from $\alpha$ by removing a sequence of hooks of lengths $h\gamma_1,\ldots,h\gamma_m$. Then
\[w_h(\beta)=w_h(\alpha)-\sum_{i=1}^m\gamma_i.\]
\end{prop}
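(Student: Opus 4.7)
The plan is to deduce this by a straightforward induction on $m$, the number of hooks in the sequence, using Lemma 4.4 of \cite{b5} as the engine. That lemma provides exactly the one-step statement: removing a single hook of length $hl$ decreases the $h$-weight by $l$. So the whole substance of the proposition is bookkeeping across the iterated removals, and the induction is just formalizing the cited ``repeatedly applying it.''

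For the base case $m=0$ there is nothing to remove and $\beta=\alpha$, so both sides equal $w_h(\alpha)$. For $m=1$, the statement is Lemma 4.4 of \cite{b5} verbatim with $l=\gamma_1$. For the inductive step, I would let $\alpha'$ denote the intermediate partition obtained from $\alpha$ after removing the first $m-1$ hooks, so that $\beta$ is obtained from $\alpha'$ by removing a single hook of length $h\gamma_m$. By the inductive hypothesis applied to the sequence $h\gamma_1,\ldots,h\gamma_{m-1}$,
\[
w_h(\alpha')=w_h(\alpha)-\sum_{i=1}^{m-1}\gamma_i,
\]
and by Lemma 4.4 of \cite{b5} applied with $l=\gamma_m$,
\[
w_h(\beta)=w_h(\alpha')-\gamma_m.
\]
Combining these two equalities yields $w_h(\beta)=w_h(\alpha)-\sum_{i=1}^m\gamma_i$, completing the induction.

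There is no real obstacle here; the only thing to be slightly careful about is ensuring that each intermediate partition $\alpha'$ is indeed a genuine partition (so that $h$-weight is defined for it and Lemma 4.4 applies), but this is automatic from the assumption that $\alpha$ admits the given sequence of hook removals in order. Consequently the proof is essentially one line of induction and could reasonably be left to the reader, as the authors effectively do.
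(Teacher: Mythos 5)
Your proof is correct and is exactly the paper's argument: the paper derives this statement by ``repeatedly applying'' Lemma 4.4 of \cite{b5}, which is precisely the induction on $m$ you have written out.
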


For $h\geq 1$ let $\alpha_{(h)}$ be the $h$-core of $\alpha$, that is the (unique) partition obtained from $\alpha$ by recursively removing all the $h$-hooks (for a proof of the uniqueness see Proposition 3.2 of \cite{b4}). We then have by the previous lemma and the definitions of $p^i$-weight and $p^i$-core, that $b_i(\alpha)=w_{p^i}(\alpha_{(p^{i+1})})$ for all $i$. In particular $b_i(\alpha)\geq 0$ and the following lemma holds (see Proposition 4.5 of \cite{b5} for a proof).

\begin{prop}\label{l'9}
For $j\geq 0$ we have that
\[w_{p^j}(\alpha)=\sum_{i\geq j}p^{i-j}b_i(\alpha).\]
\end{prop}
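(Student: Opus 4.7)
The plan is to prove this by a straightforward telescoping argument directly from Definition \ref{d1}. Since $b_i(\alpha)=w_{p^i}(\alpha)-pw_{p^{i+1}}(\alpha)$, substituting this into the right-hand side of the claimed identity produces two sums indexed by consecutive powers of $p$, and essentially everything should cancel except the leading term.

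First, I would observe that $w_{p^i}(\alpha)=0$ whenever $p^i>n$, simply because $\alpha$ has no hook of length larger than $n$. This guarantees that both the sum $\sum_{i\geq j}p^{i-j}b_i(\alpha)$ and any rearrangement of it are genuinely finite, so there is no issue with interchanging or shifting indices.

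The core of the proof is then the computation: I would substitute the definition of $b_i(\alpha)$ to split the right-hand side into
\[\sum_{i\geq j}p^{i-j}w_{p^i}(\alpha)-\sum_{i\geq j}p^{i-j+1}w_{p^{i+1}}(\alpha),\]
reindex the second sum by sending $i\mapsto i-1$ so that it becomes $\sum_{i\geq j+1}p^{i-j}w_{p^i}(\alpha)$, and then cancel the overlapping tails. Only the $i=j$ term of the first sum survives, giving $w_{p^j}(\alpha)$.

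The only potential obstacle is bookkeeping with the index shift and confirming the finiteness of the sum, both of which are elementary. No property of hooks beyond Definition \ref{d1} is required; in particular, the combinatorial fact that $b_i(\alpha)\geq 0$, established via the $p^{i+1}$-core interpretation, is not needed for this identity (though it is what makes the identity a meaningful expansion of $w_{p^j}(\alpha)$ as a nonnegative sum).
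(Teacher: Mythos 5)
Your telescoping argument is correct and complete: the sums are finite since $w_{p^i}(\alpha)=0$ once $p^i>n$, and substituting Definition \ref{d1} and shifting the index leaves exactly the $i=j$ term. The paper does not prove this lemma itself but cites Proposition 4.5 of \cite{b5}, and your computation is precisely that standard argument, so there is nothing to add.
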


The following lemma is Proposition 4.6 of \cite{b5}.

\begin{prop}\label{l1}
We have that $b_k(\alpha)\leq a_k$.

If $b_k(\alpha)=a_k$ then it is possible to remove $a_k$ hooks of length $p^k$ from $\alpha$. The resulting partition $\beta$ is unique and it satisfies $b_k(\beta)=0$ and $b_i(\beta)=b_i(\alpha)$ for $0\leq i<k$.
\end{prop}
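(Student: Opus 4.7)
My plan begins with the bound $b_k(\alpha) \leq a_k$. Since $a_k$ is the leading $p$-adic digit of $n$, we have $n < p^{k+1}$, so no hook of length $p^{k+1}$ fits into $\alpha$ and $w_{p^{k+1}}(\alpha) = 0$; by Definition \ref{d1}, $b_k(\alpha) = w_{p^k}(\alpha)$. Since recursively removing $w_{p^k}(\alpha)$ hooks of length $p^k$ accounts for $p^k \cdot w_{p^k}(\alpha) \leq n < (a_k + 1)p^k$ cells of $\alpha$, I conclude $w_{p^k}(\alpha) \leq a_k$.

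Now assume $b_k(\alpha) = a_k$, equivalently $w_{p^k}(\alpha) = a_k$. By the definition of $p^k$-weight, $a_k$ hooks of length $p^k$ may indeed be recursively removed from $\alpha$. The resulting partition $\beta$ must be the $p^k$-core of $\alpha$ (otherwise a further $p^k$-hook could be removed, contradicting $w_{p^k}(\alpha) = a_k$), and its uniqueness is Proposition 3.2 of \cite{b4}. Since $w_{p^k}(\beta) = 0$ and $|\beta| < p^{k+1}$ forces $w_{p^{k+1}}(\beta) = 0$, this gives $b_k(\beta) = 0$.

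The remaining equalities $b_i(\beta) = b_i(\alpha)$ for $0 \leq i < k$ are the real content. Via the identity $b_i(\gamma) = w_{p^i}(\gamma_{(p^{i+1})})$ recorded just before Proposition \ref{l'9}, it suffices to prove $\beta_{(p^{i+1})} = \alpha_{(p^{i+1})}$. My approach is to pass to the $p^{i+1}$-abacus display of $\alpha$: because $p^{i+1}$ divides $p^k$, removing a hook of length $p^k$ translates into sliding a single bead up by $p^{k-i-1}$ positions along a single runner. Such a move preserves the number of beads on each runner, and therefore preserves the configuration obtained by pushing every bead as far up as possible on its own runner---which is precisely the $p^{i+1}$-core. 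Iterating across the $a_k$ successive $p^k$-hook removals that transform $\alpha$ into $\beta$ then yields $\beta_{(p^{i+1})} = \alpha_{(p^{i+1})}$, as required. The delicate point---and the main obstacle---is this abacus bookkeeping: one must carefully set up the correspondence so that a $p^k$-hook really does correspond to an intra-runner slide on the $p^{i+1}$-abacus, and confirm that the $p^{i+1}$-core depends only on runner occupation numbers. The other two assertions reduce to elementary counting once the first bound is in hand.
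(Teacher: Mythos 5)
Your proof is correct. Note first that the paper itself gives no argument for this lemma: it is quoted verbatim as Proposition 4.6 of \cite{b5}, so there is no internal proof to compare against. Your argument is a sound self-contained substitute. The bound $b_k(\alpha)\leq a_k$ via $w_{p^{k+1}}(\alpha)=0$ (since $n<p^{k+1}$) and the cell count $p^kw_{p^k}(\alpha)\leq n<(a_k+1)p^k$ is exactly right, as is the observation that after removing $w_{p^k}(\alpha)$ hooks of length $p^k$ the result must be the $p^k$-core, whence uniqueness and $b_k(\beta)=0$. For the key point $b_i(\beta)=b_i(\alpha)$ you correctly reduce, via the identity $b_i(\gamma)=w_{p^i}(\gamma_{(p^{i+1})})$ recorded before Lemma \ref{l'9}, to showing $\beta_{(p^{i+1})}=\alpha_{(p^{i+1})}$, and the abacus argument you sketch is the standard and correct one: since $p^{i+1}\mid p^k$ for $i<k$, a $p^k$-hook removal is an intra-runner slide of $p^{k-i-1}$ positions on the $p^{i+1}$-abacus, so the runner occupation numbers, and hence the $p^{i+1}$-core, are unchanged (this is the standard fact, found in \cite{b1} and \cite{b4}, that removing a hook of length divisible by $q$ preserves the $q$-core). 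The only thing I would tighten is to cite that fact directly rather than re-deriving the abacus correspondence, but there is no gap.
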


In the proof of Theorem \ref{t2} we will need a corollary to this lemma.

\begin{cor}\label{c3}
Assume that $b_i(\alpha)\not=a_i$ for some $i\geq 0$. Then there exists $j$ maximal such that $b_j(\alpha)\not=a_j$. For this $j$ we have $j\leq k$ and $b_j(\alpha)<a_j$.
\end{cor}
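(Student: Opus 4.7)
The plan is to prove the two assertions of the corollary in sequence. First I would establish $j \leq k$ by checking that the equality $b_i(\alpha) = a_i$ is forced for every $i > k$: the convention makes $a_i = 0$, and since $p^i > n$ the partition $\alpha$ cannot contain a hook of length $p^i$, so $w_{p^i}(\alpha) = 0$ and consequently $b_i(\alpha) = 0 = a_i$ as well. Hence the set of indices at which $b_i(\alpha)$ differs from $a_i$ is a nonempty subset of $\{0,\ldots,k\}$, which guarantees the existence of the maximum $j$ and the bound $j \leq k$.

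For the strict inequality $b_j(\alpha) < a_j$ it is enough to prove $b_j(\alpha) \leq a_j$ and then invoke the assumption $b_j(\alpha) \neq a_j$. By maximality of $j$, one has $b_i(\alpha) = a_i$ for all $i > j$, so Proposition \ref{l'9} lets me rewrite $w_{p^j}(\alpha)$ as $b_j(\alpha) + \sum_{i > j} p^{i-j} a_i$. Comparing this with the elementary upper bound $w_{p^j}(\alpha) \leq \lfloor n/p^j \rfloor = \sum_{i \geq j} p^{i-j} a_i$, which comes from the fact that each $p^j$-hook removed from $\alpha$ uses $p^j$ cells, and cancelling the common tail gives exactly $b_j(\alpha) \leq a_j$.

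The argument is short and has no real obstacle. An alternative route would iterate Proposition \ref{l1}, peeling off $a_i$ hooks of length $p^i$ for $i = k, k-1, \ldots, j+1$ before applying the top-digit inequality at level $j$. That approach works but introduces slightly awkward bookkeeping whenever some intermediate $a_i$ vanishes, because the top $p$-adic digit of the shrinking partition then skips past those levels; going through Proposition \ref{l'9} together with the cell-count bound $p^j w_{p^j}(\alpha) \leq n$ sidesteps that issue entirely.
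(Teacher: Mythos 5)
Your proof is correct, and the second half takes a genuinely different route from the paper. For the existence of $j$ and the bound $j\le k$ you argue exactly as the paper does: for $i>k$ one has $a_i=0$ by convention and $w_{p^i}(\alpha)=w_{p^{i+1}}(\alpha)=0$ because $p^i>n$, so $b_i(\alpha)=0=a_i$. For the strict inequality, however, the paper iterates Lemma \ref{l1}: since $b_i(\alpha)=a_i$ for all $i>j$, it strips off $a_i$ hooks of length $p^i$ for $i=k,k-1,\ldots,j+1$, reaching a partition $\beta\vdash a_0+\cdots+a_jp^j$ with $b_j(\beta)=b_j(\alpha)$, and then reads $b_j(\beta)\le a_j$ off the first assertion of Lemma \ref{l1}. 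You instead combine Lemma \ref{l'9} with the maximality of $j$ to get $w_{p^j}(\alpha)=b_j(\alpha)+\sum_{i>j}p^{i-j}a_i$, and compare with the cell-count bound $p^jw_{p^j}(\alpha)\le n$, i.e.\ $w_{p^j}(\alpha)\le\lfloor n/p^j\rfloor=\sum_{i\ge j}p^{i-j}a_i$, which cancels to $b_j(\alpha)\le a_j$; together with $b_j(\alpha)\ne a_j$ this gives the strict inequality. Your computation is arguably the cleaner of the two: as you observe, the paper's iteration implicitly passes through levels where some intermediate $a_i=0$, at which the shrunken partition's own top $p$-adic digit sits below level $i$ and Lemma \ref{l1} as literally stated does not speak about $b_i$ (one must note separately that $b_i=0=a_i$ there because the partition is too small to contain a $p^i$-hook); your argument requires no such case analysis. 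The price is negligible, since Lemma \ref{l'9} is already available and the bound $p^jw_{p^j}(\alpha)\le n$ is immediate from the definition of weight. Both arguments are valid and rest on the same underlying weight bookkeeping.
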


\begin{proof}
Notice that since $b_i(\alpha)=0=a_i$ for $i>k$ as $n<p^{k+1}$, we have by assumption that $j$ exists and that $j\leq k$. The corollary follows then from repeated application of Lemma \ref{l1}, as if $b_k(\alpha)=a_k$ and $\beta$ is as in the text of Lemma \ref{l1} then $|\beta|=a_0+\ldots+a_{k-1}p^{k-1}$.
\end{proof}

The next lemma allows us to tell whether $\chi^\alpha$ is $p$-singular or not by just looking at $a_i$ and $b_i(\alpha)$ for all $i$. A proof of it can be found in Sections 3 and 4 of \cite{m2}.

\begin{prop}\label{c2}
We have that $\chi^\alpha$ is not $p$-singular if and only if $b_i(\alpha)=a_i$ for every $i\geq 0$.
\end{prop}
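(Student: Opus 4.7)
The plan is to extract $v_p(\chi^\alpha(1))$ directly from the hook length formula, rewrite it in terms of the $b_i(\alpha)$, and then appeal to a standard base-$p$ digit-sum fact.

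First, I would combine the hook length formula with Legendre's formula $v_p(n!)=(n-s_p(n))/(p-1)$, where $s_p(n)=\sum_i a_i$:
\[v_p(\chi^\alpha(1))=\frac{n-s_p(n)}{p-1}-\sum_{h\in H(\alpha)}v_p(h),\]
with $H(\alpha)$ the multiset of hook lengths of $\alpha$. Regrouping the right-hand sum by $p$-adic valuation and using the fact (already cited in the excerpt) that $w_{p^i}(\alpha)$ counts the hook lengths of $\alpha$ divisible by $p^i$, I get $\sum_{h\in H(\alpha)}v_p(h)=\sum_{i\geq 1}w_{p^i}(\alpha)$.

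Next, I would apply Lemma \ref{l'9} and swap the order of summation:
\[\sum_{i\geq 1}w_{p^i}(\alpha)=\sum_{j\geq 1}b_j(\alpha)\sum_{i=1}^{j}p^{j-i}=\sum_{j\geq 1}\frac{p^j-1}{p-1}\,b_j(\alpha).\]
Using the $j=0$ case of Lemma \ref{l'9}, namely $n=w_1(\alpha)=\sum_{i\geq 0}b_i(\alpha)p^i$, a short algebraic rearrangement collapses everything to
\[v_p(\chi^\alpha(1))=\frac{1}{p-1}\biggl(\sum_{j\geq 0}b_j(\alpha)-s_p(n)\biggr).\]

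Finally, I would close with the following elementary base-$p$ lemma: for any representation $n=\sum_j c_j p^j$ with non-negative integers $c_j$, one has $\sum_j c_j\geq s_p(n)$, with equality precisely when $c_j<p$ for every $j$ (equivalently, $c_j=a_j$); indeed every ``carry'' $c_j\mapsto c_j-p$, $c_{j+1}\mapsto c_{j+1}+1$ preserves $\sum c_jp^j$ but decreases $\sum c_j$ by exactly $p-1$, and the $p$-adic expansion is the unique representation in which no carry is available. Applying this with $c_j=b_j(\alpha)$ both confirms $v_p(\chi^\alpha(1))\geq 0$ (a sanity check) and identifies $v_p(\chi^\alpha(1))=0$ with the condition $b_j(\alpha)=a_j$ for every $j\geq 0$, which is exactly the stated criterion. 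The main obstacle, such as it is, is really just Lemma \ref{l'9} itself, which is cited rather than reproved; once that is in hand the argument reduces to Legendre's formula, a routine switch of summation, and Kummer's observation on carries in base $p$.
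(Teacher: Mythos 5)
Your argument is correct: the identity $v_p(\chi^\alpha(1))=\frac{1}{p-1}\bigl(\sum_{j\geq 0}b_j(\alpha)-s_p(n)\bigr)$ follows exactly as you compute from the hook length formula, Legendre's formula, the fact that $w_{p^i}(\alpha)$ counts hooks of length divisible by $p^i$, and Lemma \ref{l'9}, and the carry argument then gives the stated equivalence since the $b_j(\alpha)$ are non-negative and $\sum_j b_j(\alpha)p^j=n$. The paper itself offers no proof of this lemma, only a citation to Macdonald, and what you have written is essentially that cited argument, so there is nothing to compare beyond noting that your write-up fills in the omitted details correctly.
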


We still need a lemma before proving Theorem \ref{t2}. The proof follows from Corollary 2.7.33 of \cite{b1}.

\begin{prop}\label{p2}
Assume that $h\geq 1$ and that $\beta$ is a partition of $n$ with exactly $w_h(\alpha)$ parts of length $h$ and let $\gamma\vdash n-hw_h(\alpha)$ be obtained from $\beta$ by removing the parts of length $h$. Then
\[\chi^\alpha_\beta=c^\alpha_\beta\chi^{\alpha_{(h)}}_\gamma\]
with $c^\alpha_\beta\not=0$.
\end{prop}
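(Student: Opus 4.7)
The plan is to apply the Murnaghan--Nakayama rule $w := w_h(\alpha)$ times, peeling off the $w$ parts of length $h$ in $\beta$ one by one. After reordering $\beta$ as $(h^w, \gamma)$, a single application gives
\[\chi^\alpha_\beta = \sum_\xi (-1)^{L(\xi)}\, \chi^{\alpha\setminus\xi}_{(h^{w-1},\gamma)},\]
with $\xi$ ranging over $h$-hooks of $\alpha$ and $L(\xi)$ the corresponding leg length. Iterating $w$ times expresses $\chi^\alpha_\beta$ as a signed sum over length-$w$ chains of successively removable $h$-hooks,
\[\chi^\alpha_\beta = \sum_{(\xi_1,\ldots,\xi_w)} (-1)^{L(\xi_1)+\cdots+L(\xi_w)}\, \chi^{\alpha\setminus\{\xi_1,\ldots,\xi_w\}}_\gamma.\]
Since each chain removes exactly $w_h(\alpha)$ hooks of length $h$ from $\alpha$, the uniqueness of the $h$-core (Proposition 3.2 of \cite{b4}) forces the residual partition to be $\alpha_{(h)}$ in every term. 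Pulling $\chi^{\alpha_{(h)}}_\gamma$ out of the sum then yields the claimed factorization, with coefficient
\[c^\alpha_\beta = \sum_{(\xi_1,\ldots,\xi_w)} (-1)^{L(\xi_1)+\cdots+L(\xi_w)}\]
depending only on $\alpha$ and $h$, not on $\gamma$.

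The harder step is showing $c^\alpha_\beta \neq 0$. For this I would invoke the standard $h$-core/$h$-quotient theory: letting $(\alpha^{(0)},\ldots,\alpha^{(h-1)})$ denote the $h$-quotient of $\alpha$ (so that $\sum_i |\alpha^{(i)}| = w$), one has a sign-preserving bijection between length-$w$ sequences of successive $h$-hook removals from $\alpha$ down to $\alpha_{(h)}$ and pairs consisting of an ordered decomposition of $\{1,\ldots,w\}$ into blocks of sizes $|\alpha^{(i)}|$ together with a standard Young tableau on each $\alpha^{(i)}$. Up to a global sign depending only on $\alpha$, the signed count therefore equals
\[\binom{w}{|\alpha^{(0)}|,\ldots,|\alpha^{(h-1)}|}\prod_{i=0}^{h-1} f^{\alpha^{(i)}},\]
which is strictly positive. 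This enumeration is precisely the content of Corollary 2.7.33 of \cite{b1}, so $c^\alpha_\beta \neq 0$.

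The main obstacle in this argument is the non-vanishing step: the iterated Murnaghan--Nakayama expansion is routine bookkeeping, and once one knows the endpoint of every chain is forced to be $\alpha_{(h)}$ the factorization is immediate. Ruling out cancellation in the signed sum, however, is not obvious from the combinatorics of hook removals alone and seems to genuinely require identifying $c^\alpha_\beta$ via the $h$-quotient. This is where the real work (and the appeal to \cite{b1}) is concentrated; any direct sign-tracking attempt would otherwise risk collapsing.
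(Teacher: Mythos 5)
Your argument is correct and rests on the same foundation as the paper, whose entire proof is the citation of Corollary 2.7.33 of \cite{b1}: you have essentially reconstructed that corollary by iterating Murnaghan--Nakayama, using uniqueness of the $h$-core to force the endpoint of every removal chain, and using the $h$-quotient bijection (equivalently, the well-definedness of the $h$-sign) to see that all chains carry the same sign, so $c^\alpha_\beta$ is $\pm$ a positive count. Your diagnosis that the only non-routine point is ruling out cancellation is also exactly right.
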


\section{Proof of Theorem \ref{t2}}\label{s1}

We are now ready to prove Theorem \ref{t2}.

{\bf ``\ref{i1}$\mathbf{\Rightarrow}$\ref{i3}''}: Assume that $\chi^\alpha$ is $p$-singular. Then from Lemma \ref{c2} there exists $i\geq 0$ with $b_i(\alpha)\not=a_i$. Let $j$ be maximal such that $b_j(\alpha)\not=a_j$. From Corollary \ref{c3} we have that $j\leq k$ and $b_j(\alpha)<a_j$. From Lemma \ref{l'9} we then have that $j\leq k$ and
\[w_{p^j}(\alpha)=\sum_{i\geq j}p^{i-j}b_i(\alpha)<\sum_{i\geq j}p^{i-j}a_i=\sum_{i=j}^kp^{i-j}a_j.\]
Let now
\[\beta=\left(d_{k,1}p^k,\ldots,d_{k,{h_k}}p^k,\ldots,d_{0,1},\ldots,d_{0,{h_0}}\right)\vdash n\]
be of $p$-adic type. Since
\[\sum_{i=j}^kp^{i-j}\sum_{l=1}^{h_i}d_{i_l}=\sum_{i=j}^kp^{i-j}a_i>w_{p^j}(\alpha)\]
we have from Lemma \ref{p1} that we cannot remove from $\alpha$ a sequence of hooks with lengths given by
\[\left(d_{k,1}p^k,\ldots,d_{k,{h_k}}p^k,\ldots,d_{j,1}p^j,\ldots,d_{j,{h_j}}p^j\right).\]
In particular, by the Murnaghan-Nakayama formula, we have that $\chi^\alpha_\beta=0$.

{\bf ``\ref{i3}$\mathbf{\Rightarrow}$\ref{i2}''}: Clear as $\lambda_n$ is of $p$-adic type.

{\bf ``\ref{i2}$\mathbf{\Rightarrow}$\ref{i4}''}: Assume that we can remove from $\alpha$ a sequence of hooks of lengths given by the parts of $\lambda_n$. We will show that $\chi^\alpha_{\lambda_n}\not=0$.

We have that $\alpha=\alpha_{(p^{k+1})}$ as $n<p^{k+1}$ and that for $j\geq 0$
\[(\alpha_{(p^{j+1})})_{(p^j)}=\alpha_{(p^j)}.\]
Fix now $i\geq 0$ and assume that $\gamma$ is obtained from $\alpha$ by removing a sequence of hooks of lengths $((p^k)^{a_k},\ldots,(p^{i+1})^{a_{i+1}})$. Then $|\gamma|=a_ip^i+\ldots+a_0<p^{i+1}$ and so, as $\gamma$ is obtained from $\alpha$ by removing hooks of lengths divisible by $p^{i+1}$ we have that $\gamma=\alpha_{(p^{i+1})}$. By assumption we can then remove $a_i$ hooks of length $p^i$ from $\alpha_{(p^{i+1})}$. Since $|\alpha_{(p^{i+1})}|<(a_i+1)p^i$ it follows that $w_{p^i}(\alpha_{(p^{i+1})})=a_i$.

From Lemma \ref{p2} we then have that
\begin{eqnarray*}
\chi^\alpha_{\lambda_n}&=&c^\alpha_{p^k}\chi^{\alpha_{(p^k)}}_{((p^{k-1})^{a_{k-1}},\ldots,1^{a_0})}\\
&=&c^{\alpha_{(p^{k+1})}}_{p^k}c^{\alpha_{(p^k)}}_{p^{k-1}}\chi^{\alpha_{(p^{k-1})}}_{((p^{k-2})^{a_{k-2}},\ldots,1^{a_0})}\\
&=&\ldots\\
&=&\chi^{\alpha_{(1)}}_{(0)}\prod_{i=0}^kc^{\alpha_{(p^{i+1})}}_{p^i}\\
&\not=&0,
\end{eqnarray*}
since $\alpha_{(1)}=(0)$ and $\chi^{(0)}_{(0)}=1$ (as $\chi^{(0)}$ is the only irreducible character of $S_0$).

{\bf ``\ref{i4}$\mathbf{\Rightarrow}$\ref{i1}''}: Assume that $\chi^\alpha$ is not $p$-singular. We will prove that we can remove from $\alpha$ a sequence of hooks with lengths given by the parts of $\lambda_n$.

From Lemma \ref{c2} we have that $b_i(\alpha)=a_i$ for $i\geq 0$. Assume that for some $j\geq 0$ we can remove a sequence of hooks of lengths given by the parts of
\[((p^k)^{a_k},\ldots,(p^{j+1})^{a_{j+1}})\]
from $\alpha$. This clearly holds for $j=k$, as in this case $j+1=k+1>k$ and so the above partition does not have any part (of length $>0$). Assume that $\gamma$ can be obtained from $\alpha$ by removing such a sequence of hooks. Since $a_i=b_i(\alpha)=0$ for $i>k$ we have from Lemmas \ref{p1} and \ref{l'9} that
\[w_{p^j}(\gamma)=w_{p^j}(\alpha)-\sum_{i\geq j+1}p^{i-j}a_i=\sum_{i\geq j}p^{i-j}b_i(\alpha)-\sum_{i\geq j+1}p^{i-j}b_i(\alpha)=b_j(\alpha)=a_j.\]
So we can remove from $\gamma$ a sequence of $a_j$ hooks of length $p^j$. In particular we can remove from $\alpha$ a sequence of hooks with lengths given by the parts of
\[((p^k)^{a_k},\ldots,(p^j)^{a_j})\]
and then we can conclude by induction.

Having proved that
\[\ref{i1}\Rightarrow\ref{i3}\Rightarrow\ref{i2}\Rightarrow\ref{i4}\Rightarrow\ref{i1},\]
we have that the 4 conditions are equivalent and so Theorem \ref{t2} is proved.

\section*{Remarks on Navarro's question}

Going back to the question at the beginning of the paper we may reformulate it, due to Corollary \ref{c1}, as follows: are there more $p$-vanishing classes than those labeled by partitions of $p$-adic type?

For $p=2$ or $p=3$ the answer to this question is yes and examples are given by the conjugacy classes labeled by the partitions $(1,1)$ and $(2,1)$ respectively. The author proved in her master thesis (\cite{m1}) that, for $p=2$ or $p=3$, partitions labeling $p$-vanishing conjugacy classes differ from partitions of $p$-adic type only in the parts of length less than 8 or 9 respectively, and that these parts need to correspond to a partition labeling a $p$-vanishing conjugacy class (of $S_m$, where $n=m+lp^i$ with $p^i$ equal to either 8 or 9 and $0\leq m<p^i$ and $l\geq 0$). In \cite{m1} it was also proved that every such conjugacy class is $p$-vanishing. For example $(1,1)$ is a 2-vanishing partition of 2 which is not of 2-adic type. It then follows that $(8,1,1)$ is also a 2-vanishing partition, since $10=8+2$ and $(8)$ is a partition of 8 of 2-adic type. Similarly $(2,1)$ is 3-vanishing but not of 3-adic type. As $21=2\cdot 9+3$ we have that $(18,2,1)$ and $(9,9,2,1)$ are 3-vanishing.

For $p\geq 5$ the question remains open. The author conjectures that the answer is no in this case and work has been done in \cite{m1} to try to prove this conjecture, being able to prove that the conjecture holds as long as, for all $n$, whenever $\alpha\vdash n$ is $p$-vanishing then $\sum_{\alpha_j<a_0}\alpha_j\leq a_0$ (by definition $a_0$ depends on $n$).

For more details about these results see \cite{m3}.

\section*{Acknowledgements}

Most of the work contained in this paper is part of the author's master thesis (\cite{m1}), which was written at the University of Copenhagen, under the supervision of J\o rn B. Olsson, whom the author would like to thank for his help in reviewing the paper.

While writing the paper the author was supported by the DFG grant for the Graduiertenkolleg Experimentelle und konstruktive Algebra at RWTH Aachen University (GRK 1632).

\end{document}